\documentclass[12pt]{article}
\usepackage{amsmath,amsthm,amsfonts,amssymb,mathrsfs,bm}
\usepackage[usenames]{color}
\usepackage{hyperref}
\usepackage{amssymb}

\oddsidemargin	0.635cm
\textwidth	15.3cm
\topmargin	-1cm
\textheight	21cm
\parindent	10pt
\parskip 	\bigskipamount

\newtheorem{theorem}{Theorem}[section]

\newtheorem{corollary}[theorem]{Corollary}
\newtheorem{proposition}[theorem]{Proposition}

\newtheorem{definition}{Definition}

\def\Z{\mathbb{Z}}
\def\R{\mathbb{R}}
\def\C{\mathbb{C}}

\def\E{\mathbb{E}}
\def\P{\mathbb{P}}

\def\td{\mathbb{T}^d}
\def\rd{{\R^d}}
\def\t{\mathbb{T}}
\def\wt{\widetilde}
\def\th{\theta}
\def\S{\mathcal{S}}
\def\b{\beta}

\def\O{\Omega}

\def\ph{\varphi}
\def\var{\mathrm{Var}}
\def\cov{\mathrm{Cov}}
\def\sin{\mathrm{sin}}

\def\eps{\epsilon}
\def\del{\delta}
\def\h{\hat}

\def\G{\Gamma}
\def\u{\Upsilon}
\def\c{\complement}
\def\d{\mathrm{d}}

\def\inn{\mathrm{in}}
\def\out{\mathrm{out}}

\renewcommand{\l}[0]{\left }
\renewcommand{\r}[0]{\right}

\makeatletter
\renewcommand*{\@cite@ofmt}{\hbox}
\makeatother

\begin{document}
\title{Generalized stealthy hyperuniform processes : maximal rigidity and the bounded holes conjecture }
\author{
	\begin{tabular}{c}
		{Subhro Ghosh}\\  
		 Princeton University\\ subhrowork@gmail.com
	\end{tabular}
	\and
	\begin{tabular}{c}
		{Joel L. Lebowitz}
		 \\ Rutgers University\\ lebowitz@math.rutgers.edu
	\end{tabular}
}
\date{\today}
\maketitle

\begin{abstract}

We study translation invariant stochastic processes on $\R^d$ or $\Z^d$ whose diffraction spectrum or structure function $S(k)$, i.e. the Fourier transform of the \textsl{truncated} total pair correlation function, vanishes on an open set $U$ in the wave space. A key family of such processes are ``stealthy'' hyperuniform point processes, for which the origin $k=0$ is in $U$; these are of much current physical interest. We show that all such processes exhibit the following remarkable “maximal rigidity” : namely, the configuration outside a bounded region determines, with probability 1, the exact value (or the exact locations of the points) of the process inside the region. In particular, such processes are completely determined by their tail. In the 1D discrete setting (i.e. $\Z$-valued processes on $\Z$), this can also be seen as a consequence of a recent theorem of Borichev, Sodin and Weiss (\cite{BSW}); in higher dimensions or in the continuum, such a phenomenon seems novel. For stealthy hyperuniform point processes, we prove the Zhang-Stillinger-Torquato conjecture (\cite{Bounded}) that such processes have bounded holes (empty regions), with a universal bound that depends inversely on the size of $U$. 

\end{abstract}

\section{Introduction}

In recent years, a special class of hyperuniform particle systems, known as stealthy hyperuniform (henceforth abbreviated as SH) systems, have attracted considerable attention (\cite{Stealthy-1}, \cite{Stealthy-2}, \cite{Stealthy-3}, \cite{Stealthy-4}, \cite{Stealthy-5}). These systems are characterized by the structure function $S(k)$ vanishing in a neighbourhood of $k=0$. A natural generalization of SH point processes is to consider point processes, or random fields, with a gap in the spectrum on an open set $U$ which may not include the origin. We shall denote these processes as Generalized Stealthy (henceforth abbreviated as GS) processes.

The nomenclature ``stealthy'', as well as the physical interest in SH particle systems, stems from the fact that such systems are optically transparent (invisible) for wave vectors $k$ in the gap $U$. Numerical and experimental investigations have been carried out regarding how to construct  SH particle systems. These systems cannot be equilibrium systems, with tempered potentials, at finite temperatures. They may, however, be ground states of such systems, e.g. the periodic (disordered ?) zero temperature states of classical systems, or they can be generated as non-equilibrium states. 

SH systems are an extension of hyperuniform (superhomogeneous) particle systems. Hyperuniform systems, which have been studied extensively both in the physics and the mathematics literature (\cite{AM}, \cite{MY}, \cite{GLS}, \cite{GL}, \cite{ToSt}, \cite{Glass}, \cite{Avian}, \cite{Jammed},  \cite{Photonic}, \cite{Neuronal}, \cite{Expt-2}, \cite{Expt-3}, \cite{HL}, \cite{HCL}), have reduced fluctuations: the variance of the particle number in a domain $D$ in $\R^d$ or $\Z^d$ grows slower than the volume of $D$. The significance of hyperuniform materials, and in particular SH systems, lies in the fact that they embody properties of both crystalline and disordered or random systems.  (see \cite{Torq}, \cite{GL} and the references therein). 

For translation invariant systems, an equivalent characterization of hyperuniformity can be obtained by looking at its structure function. Hyperuniformity then boils down to the vanishing of the structure function $S(k)$ at $k=0$. SH systems, therefore, involve a specific manner in which this vanishing of the structure function takes place.

In particle systems theory, an important object of investigation is the event that there is a large 	``hole'' in the particle configuration, where, by a ``hole'' we generally mean a ball, with centre at the origin, that is devoid of any particle. 
The rate of decay of the hole probability encodes important structural information about the particle system at hand. Heuristically speaking, the faster this rate of decay the stronger is the crystalline (or lattice-like) behaviour of the particle system. 

The examination of hole probabilities for stealthy hyperuniform processes, at the numerical level, brings up intriguing challenges. In particular, numerical studies led to the conjecture that the hole sizes for stealthy hyperuniform processes are, in fact, uniformly bounded. Furthermore, it was conjectured that the upper bound on the hole size depends only on the size of the gap for the structure function of the process. 
In the article \cite{Bounded}, Zhang, Stillinger and Torquato provide numerical evidence in support of these remarkable conjectural properties of stealthy hyperuniform processes. Note that this is definitely not true for all hyperuniform systems, as exemplified by the one component Coulomb systems, in particular by the Ginibre ensemble (\cite{JLM}).

In this paper, we carry out a rigorous mathematical analysis of stealthy hyperuniform processes, and establish the veracity of this conjecture. In particular, we prove that 
\begin{theorem}
\label{bounded-holes}
	Let $\Xi$ be a stealthy hyperuniform point process. Let $B(x;r)$ be the ball with centre $x$ and radius $r$.  Then there exists a positive number $r_0$ such that $\P[|\Xi \cap B(x;r_0)|=0]=0$. 
\end{theorem}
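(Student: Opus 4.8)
The plan is to exploit the maximal rigidity phenomenon, which is the main structural result of the paper: for a stealthy hyperuniform process, the configuration outside a bounded ball determines the configuration inside that ball almost surely. The key point is that if a ball of radius $r_0$ could be empty with positive probability, then the rigidity statement would force a contradiction with the ergodic/translation-invariance structure of the process, essentially because an empty ball carries no information yet must, by rigidity, pin down the points inside it --- which is only consistent if those points are deterministic, and translation invariance precludes any deterministic point.

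First I would fix the radius $R$ of maximal rigidity coming from the spectral gap $U$: outside a ball $B(0;R)$, the configuration $\Xi \cap B(0;R)^{\c}$ determines $\Xi \cap B(0;R)$ with probability $1$. I would then argue by contradiction: suppose that for every $r_0$ we have $\P[|\Xi \cap B(x;r_0)|=0]>0$; by translation invariance it suffices to take $x=0$. Choosing $r_0$ large enough that $B(0;R) \subset B(0;r_0)$ (or, more precisely, large enough that the ``buffer'' region $B(0;r_0)\setminus B(0;R)$ around the rigidity ball is also empty), the event $E = \{\Xi \cap B(0;r_0)=\emptyset\}$ has positive probability and is measurable with respect to the exterior field $\Xi\cap B(0;R)^{\c}$. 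On this event, the exterior configuration restricted to the annulus $B(0;r_0)\setminus B(0;R)$ is the empty configuration. But the maximal rigidity statement asserts that the exterior configuration determines the interior configuration $\Xi \cap B(0;R)$; hence on $E$ the interior configuration is a \emph{deterministic} function of the (deterministic, empty) annular data, so $\Xi \cap B(0;R)$ equals some fixed configuration $\xi_0$ almost surely on $E$. Since $E$ itself forces $\Xi \cap B(0;R)=\emptyset$, we get that $E$ is the event that the interior configuration is empty, and more importantly the rigidity map sends ``empty annulus'' to ``empty interior.''

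The contradiction is then extracted from translation invariance together with the fact that holes of \emph{arbitrary} radius would have to be forbidden once one is forbidden. Concretely, I would bootstrap: since the rigidity radius $R$ is fixed and finite, but the hole radius $r_0$ can be taken arbitrarily large, an empty ball $B(0;r_0)$ with $r_0$ slightly larger than $R$ forces, via rigidity applied at every point $y$ with $B(y;R)\subset B(0;r_0)$, that the whole ball $B(0;r_0)$ being empty is ``self-propagating'' --- the emptiness of the exterior annulus around each such $B(y;R)$ forces $B(y;R)$ empty --- so in fact $\P[|\Xi \cap B(0;r_0)|=0]>0$ would already be incompatible with the process having positive intensity unless the emptiness propagates to all of $\R^d$, i.e. $\P[\Xi=\emptyset]>0$, which contradicts translation invariance and nontriviality (or contradicts the hyperuniformity, since the zero process is not a hyperuniform point process of positive intensity). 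This yields the existence of $r_0$ with $\P[|\Xi \cap B(x;r_0)|=0]=0$, and translation invariance makes the bound uniform in $x$.

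I expect the main obstacle to be making the ``self-propagation'' step rigorous: one must carefully choose the buffer width so that the exterior data seen by each shifted rigidity ball is genuinely determined by (and equal to the empty configuration on) the large hole, and then argue that the rigidity map is compatible across overlapping balls in a way that propagates emptiness outward without circularity. A clean way to handle this is to iterate: emptiness of $B(0;r_0)$ gives, by rigidity at a point $y$ on the boundary layer, emptiness of a slightly larger ball, and repeating this countably often (using that each step enlarges the radius by a fixed positive amount controlled by $r_0 - R$) shows $\P[\Xi \cap B(0;n)=\emptyset]$ is bounded below uniformly in $n$, hence $\P[\Xi=\emptyset]>0$, the desired contradiction. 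The only subtlety is ensuring the rigidity statement is being applied to an honest conditioning event of positive probability at each stage, which follows because all the events in the chain contain the positive-probability event $\{\Xi\cap B(0;r_0)=\emptyset\}$.
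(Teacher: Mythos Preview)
Your proposal has a genuine gap: you are misusing the rigidity statement. Maximal rigidity (Theorem~\ref{rigidity}) says that $\Xi\cap B(0;R)$ is a measurable function of the \emph{entire} exterior $\Xi\cap B(0;R)^\c$, not of the annulus $B(0;r_0)\setminus B(0;R)$. On your event $E=\{\Xi\cap B(0;r_0)=\emptyset\}$, the annular portion of the exterior is indeed deterministic (empty), but the rigidity map still takes as input the random configuration $\Xi\cap B(0;r_0)^\c$, which is not deterministic on $E$. So your sentence ``the interior configuration is a deterministic function of the (deterministic, empty) annular data'' is simply false: the input to the rigidity map is not the annulus alone. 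There is no contradiction in the rigidity map sending many different exterior configurations (all of which happen to be empty on the annulus) to the empty interior.

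The propagation step fails for the same reason. When you shift to a center $y$ near the boundary of the hole, the exterior $B(y;R)^\c$ now contains points of $\Xi$ lying just outside $B(0;r_0)$, and these are random. Rigidity at $y$ tells you $\Xi\cap B(y;R)$ is determined by this random exterior, but gives you no mechanism to conclude that the value is $\emptyset$; the rigidity map is not local and carries no information of the form ``empty nearby $\Rightarrow$ empty inside.'' So the hole does not propagate, and you never reach $\P[\Xi=\emptyset]>0$. (As a side remark, there is no distinguished ``rigidity radius $R$'': rigidity holds for every bounded domain, which is another sign that it cannot by itself produce a length scale for the hole bound.)

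The paper's proof is entirely different and quantitative. It constructs a nonnegative Schwartz function $\ph$ with $\h\ph$ supported in the spectral gap, so that $\int\ph\,\d[\Xi]=\rho\h\ph(0)>0$ almost surely. It first uses this identity to prove an anti-concentration bound (Theorem~\ref{anti-conc}): any cube of side $\sim b^{-1}$ contains at most $c\rho b^{-d}$ points. Then, assuming a hole of radius $R\theta$ at the origin, the positive constant $\rho\h\ph(0)$ must be accounted for entirely by contributions from annuli $A_j$ outside the hole; combining the decay $|\ph(x)|\le Cb^{-1}|x|^{-d-1}$ with the anti-concentration bound on each annulus yields $\rho\h\ph(0)\le c_2\rho R^{-1}$, forcing $R$ bounded by a universal constant. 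This argument produces the explicit inverse dependence on $b$ (Theorem~\ref{inverse}), which a rigidity-based argument would not.
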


Furthermore, we establish the explicit dependence of $r_0$ on the size of the gap $b$ in the spectrum, and show that it satisfies an inverse power law.
\begin{theorem}
\label{inverse}
The quantity $r_0$, as in the statement of Theorem \ref{bounded-holes}, can be chosen to be $Cb^{-1}$, where $b$ is the radius of the maximal ball (centered at the origin) that is contained in the gap of the structure function $S$, and $C$ is a universal constant. 
\end{theorem}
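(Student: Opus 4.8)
The plan is to re-run the argument behind Theorem~\ref{bounded-holes} while keeping every length scale explicit, so that the gap radius $b$ enters only through the support of one auxiliary test function and a rescaling reveals that all relevant lengths are multiples of $b^{-1}$ while the intensity of $\Xi$ cancels out.

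First I would fix, depending only on $d$, a Schwartz ``template'' $\psi_\star\ge 0$ with $\widehat{\psi_\star}$ supported in the unit ball $B(0;1)$, with $\psi_\star(0)>0$, with $\psi_\star\ge\tfrac12\psi_\star(0)$ on some ball $B(0;\eta)$, and with $\psi_\star(x)\le C_d(1+|x|)^{-(d+1)}$; one may take $\psi_\star=|\check\chi|^2$ for a smooth nonnegative even bump $\chi$ supported in $B(0;1/2)$, or a Fej\'er-type kernel. Put $\psi:=\psi_\star(b\,\cdot)$, so $\widehat\psi$ is supported in $B(0;b)\subseteq U$. Since $S$ vanishes on $U$, the variance of $\sum_{x\in\Xi}\psi(x-y)$ (which up to a fixed normalization equals $\int|\widehat\psi|^2\,dS$) is $0$, and as this quantity has finite mean $\rho\int\psi$ it follows that, almost surely, $\sum_{x\in\Xi}\psi(x-y)=\rho\int\psi=\rho\,b^{-d}\!\int\psi_\star=:c$ at $y=0$ and simultaneously for all $y$ in a fixed countable dense set, $\rho$ being the intensity. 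The key point is that having this identity for a dense set of $y$ on a \emph{single} almost sure event yields a \emph{deterministic} local-density bound: retaining only the points in $B(y;\eta/b)$ and using $\psi\ge\tfrac12\psi_\star(0)$ there gives $|\Xi\cap B(y;\eta/b)|\le 2c/\psi_\star(0)$ for a dense set of $y$, whence, covering a ball of radius $R$ by $O_d\!\big((bR)^d\big)$ such small balls, $|\Xi\cap B(y;R)|\le C_d'\,\rho\,(R+b^{-1})^d$ for all $y$ and $R$, almost surely, with $C_d'$ depending only on $d$.

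Next I would run the hole estimate on this almost sure event. If $\Xi\cap B(0;r)=\varnothing$, then evaluating the deterministic identity at $y=0$ gives $c=\sum_{x\in\Xi,\,|x|\ge r}\psi(x)$; splitting the sum over the dyadic annuli $\{2^{j}r\le|x|<2^{j+1}r\}$, bounding the point count in each by the density bound of the previous step and bounding $\psi$ there by $C_d(1+b2^{j}r)^{-(d+1)}$, the resulting geometric series converges with a constant depending on $d$ alone and yields $c\le C_d''\,\rho\,r^{d}(br)^{-(d+1)}$. Since $c=\rho\,b^{-d}\!\int\psi_\star$, the intensity $\rho$ cancels and one is forced to conclude $br\le C_d''/\!\int\psi_\star=:C^\ast(d)$. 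Therefore, taking $r_0:=2\,C^\ast(d)\,b^{-1}$, the event $\{\Xi\cap B(0;r_0)=\varnothing\}$ cannot occur on the almost sure event, i.e.\ $\P\big[\,|\Xi\cap B(0;r_0)|=0\,\big]=0$, and translation invariance gives the same for any centre: this is Theorem~\ref{inverse} with $C=C(d)=2\,C^\ast(d)$, explicit from the template and the geometric series.

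The step I expect to be the real obstacle is the one in the second paragraph: upgrading ``variance $0$ for each band-limited test function'' to the \emph{deterministic} uniform density bound. This is exactly what forces the hole probability to be $0$ rather than merely small --- a bare Campbell/Markov estimate with the same test function only gives $\P[\text{hole of radius }r]\le\big(\int_{|u|\ge br}\psi_\star\big)\big/\!\int\psi_\star$, which tends to $0$ but is positive for every finite $r$. A secondary, routine matter is to endow $\psi_\star$ with enough decay (the exponent $d+1$ suffices) so that the dyadic series converges with $d$-only constants, and to note that the second-moment finiteness used in the variance identity is automatic once the structure function is defined; finally, it is worth recording explicitly that $\rho$ enters linearly both in $c$ and in the density bound, which is precisely why the resulting threshold depends on $b$ (and $d$) alone.
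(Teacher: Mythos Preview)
Your proposal is correct and follows essentially the same route as the paper: construct a band-limited nonnegative test function by rescaling a universal template by $b$, use the variance-zero identity to get a deterministic local density bound (the paper states this separately as Theorem~\ref{anti-conc}), and then on a hole event sum the identity over annuli, tracking the $b$-dependence so that $\rho$ cancels and one is left with $br\le C(d)$. The only differences are cosmetic: you use dyadic annuli and balls where the paper uses $L_\infty$ annuli of fixed width $\theta=Cb^{-1}$ and cubes, and you fold the anti-concentration step into the proof rather than isolating it as a standalone theorem.
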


The proofs of Theorems \ref{bounded-holes} and \ref{inverse} exploit an anti-concentration property for particle numbers of stealthy hyperuniform processes:
\begin{theorem}
\label{anti-conc}
Let $\Xi$ be a stealthy hyperuniform point process on $\R^d$ or $\Z^d$ with one point intensity $\rho$ and $b$ the radius of the  largest ball around the origin (in the wave space) on which the structure function of $\Xi$ vanishes. There exists numbers $C,c>0$ (independent of all  parameters of $\Xi$) such that,  the number of points of $\Xi$ in any given $d$-dimensional cube of side-length $Cb^{-1}$ is a.s. bounded above by $c\rho b^{-d}$.
\end{theorem}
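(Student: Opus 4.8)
The plan is to deduce Theorem~\ref{anti-conc} from two ingredients: the exact ``rigidity identity'' for band-limited linear statistics of a process with a spectral gap, and a classical non-negative Fej\'er-type majorant whose Fourier transform is supported inside the gap. For the first ingredient, let $\Xi=\sum_i\delta_{x_i}$ be a translation invariant point process with intensity $\rho$ and structure function $S$, and let $f$ be real with $\widehat f$ continuous and compactly supported (so that $f$ is bounded, band-limited and in $L^1\cap L^2$). The standard covariance identity for point processes, obtained by expanding the second moment into the one- and two-point intensities and applying Parseval to the truncated pair correlation, reads
\begin{equation*}
\var\Big(\sum_i f(x_i)\Big)=\rho\int_{\rd}|\widehat f(k)|^2\,S(k)\,\frac{\d k}{(2\pi)^d}.
\end{equation*}
Since $f\in L^1$ and $\rho<\infty$, the series $\L_f:=\sum_i f(x_i)$ converges absolutely a.s.\ with $\E[\L_f]=\rho\int f$. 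If moreover $\widehat f$ is supported in the ball $B(0;b)$ on which $S\equiv 0$, the right-hand side above vanishes, and $\L_f$ is therefore a.s.\ constant: $\sum_i f(x_i)=\rho\int_{\rd}f(x)\,\d x$ almost surely. (This is the scalar shadow of the maximal rigidity phenomenon; it is all we need.)

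\emph{Step 2: a non-negative band-limited majorant.} We build $f=f_b\ge0$ on $\rd$ with (i) $\widehat f$ continuous and vanishing outside $B(0;b)$; (ii) $f\ge1$ on some cube $Q_0$ of side-length $Cb^{-1}$ centred at the origin, with $C$ depending only on $d$; and (iii) $\int_{\rd}f\le c\,b^{-d}$, with $c$ depending only on $d$. Take the one-dimensional Fej\'er kernel $g_a(t)=\big(\tfrac{\sin(at)}{at}\big)^2\ge0$, whose Fourier transform is the tent supported on $[-2a,2a]$ and which satisfies $g_a(t)\ge(2/\pi)^2$ for $|t|\le\pi/(2a)$, and set
\begin{equation*}
f(x)=\Big(\tfrac{\pi}{2}\Big)^{2d}\prod_{j=1}^d g_a(x_j),\qquad a=\frac{b}{2\sqrt d}.
\end{equation*}
Then $\widehat f$ is a product of tents, vanishing outside $(-2a,2a)^d\subset B(0;b)$; $f\ge0$ everywhere and $f\ge1$ on $[-\pi/(2a),\pi/(2a)]^d$, a cube of side $2\pi\sqrt d\,b^{-1}$; and $\int_{\rd}f=(\pi/2)^{2d}(\pi/a)^d=(\pi^3\sqrt d/2)^d\,b^{-d}$. (A radial alternative—a suitable rescaling of $|\psi|^2$ with $\psi$ the inverse Fourier transform of $\mathbf 1_{B(0;b/2)}$—yields an estimate of the same shape.)

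\emph{Step 3: conclusion.} Fix an arbitrary cube $Q$ of side-length $Cb^{-1}$ and let $v$ be its centre. Replacing $\widehat f(k)$ by $e^{-ik\cdot v}\widehat f(k)$ leaves its support unchanged, so $f_v(x):=f(x-v)$ still satisfies the hypotheses of Step~1 and of Step~2 relative to $Q$. Applying the rigidity identity to $f_v$ and using $f_v\ge0$ on $\rd$ together with $f_v\ge1$ on $Q$, we obtain, almost surely,
\begin{equation*}
|\Xi\cap Q|\ =\ \sum_{x_i\in Q}1\ \le\ \sum_{x_i\in Q}f_v(x_i)\ \le\ \sum_{i}f_v(x_i)\ =\ \rho\int_{\rd}f\ \le\ c\,\rho\,b^{-d},
\end{equation*}
which is the assertion, with $C=2\pi\sqrt d$ and $c=(\pi^3\sqrt d/2)^d$—constants depending on $d$ but on no parameter of $\Xi$. (If a single null set covering all $Q$ simultaneously is wanted, run the argument over a countable dense family of centres and use monotonicity of $|\Xi\cap Q|$ in $Q$.)

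\emph{Where the work is.} The argument is largely soft once the variance identity is in hand; the genuine content is Step~1, i.e.\ converting the spectral gap into a \emph{deterministic} bound on every band-limited linear statistic—precisely the input that is available on $\Z$ in one dimension via \cite{BSW} but that must be supplied directly in the continuum and in higher dimensions. The one point requiring care is that the variance formula, and hence the a.s.\ identity $\sum_i f(x_i)=\rho\int f$, be legitimate for the majorant of Step~2 under only the standing hypothesis that $\Xi$ has a structure function vanishing on $B(0;b)$; taking $f$ with $\widehat f$ continuous and compactly supported (so $f\in L^1\cap L^\infty$ with controlled decay) keeps this routine, reducing if necessary to a truncation/approximation argument. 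Sharpening the constants $C,c$, in particular whether they can be made dimension-free via an extremal (rather than product-Fej\'er) majorant, is a secondary optimization that does not affect the statement.
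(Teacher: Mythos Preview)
Your proof is correct and follows essentially the same route as the paper: both derive the a.s.\ identity $\sum_i f(x_i)=\rho\int f$ for band-limited $f$ from the vanishing-variance formula, and then feed in a non-negative test function that is $\ge$ a constant on a cube of side $\sim b^{-1}$ with Fourier support in $B(0;b)$. The only substantive difference is the choice of majorant---you use an explicit product of Fej\'er kernels and obtain concrete constants, whereas the paper takes $\varphi(x)=b^d[\psi(bx)]^2$ with $\widehat\psi\in C_c^\infty$ radial, which yields a Schwartz $\varphi$ (a regularity the paper exploits later in the bounded-holes argument but is not needed for the anti-concentration statement itself).
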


The fact that holes in SH processes cannot be bigger than a deterministic size is suggestive of a high degree of crystalline behaviour in these processes. In this paper, we go further, and establish a remarkable maximal rigidity property of these ensembles. We can, in fact, do this in the setting of GS processes. Before stating our main result, we need to discuss the concepts of rigidity and maximal rigidity. This we will do in the context of $\rd$, noting that these concepts in the setting of $\Z^d$ are completely analogous.

For a point process (more generally, a random field or a random measure) $\Xi$ on $\rd$ and a bounded domain $D \subset \rd$,  statistic $\Psi$ defined on $\Xi$ restricted to $D$ is said to be rigid if $\Psi$ is completely determined by (that is, a deterministic function of) the process $\Xi$ restricted to $D^\c$. To put things in perspective, a point process having rigidity is in notable contrast to the Poisson process, where the process inside and outside of $D$ are statistically independent.  Rigidity phenomena for particle systems have been investigated quite intensively in the last few years, and it has been shown to appear in many natural models which are, nonetheless, far removed from being crystalline. Key examples include  the Ginibre ensemble, Gaussian zeros, the Dyson log gas, Coulomb systems and various determinantal  processes related to random matrix theory (see, e.g., \cite{GP}, \cite{G1}, \cite{Bu}, \cite{OsadaSh}, \cite{GL}).  In \cite{G2}, rigidity phenomena were shown to be intimately connected to mutual singularity properties of various Palm measures of a point process, which is a topic of independent interest in particle systems theory. In \cite{GK}, a one parameter family of Gaussian random series were introduced, whose zeros exhibit rigidity of increasing number of  moments, as the parameter is allowed to vary.

In this paper, we show that  GS random measures on $\rd$ or $\Z^d$ exhibit maximal rigidity : namely, for any domain $D \subset \rd$,  the random measure $[\Xi]_{| D^\c}$ determines completely the measure $[\Xi]_{| D}$ (that is, the latter is a deterministic measurable function of the former). Stated in formal terms, we prove:

\begin{theorem}
\label{rigidity}
Let $\Xi$ be a generalized stealthy random measure on $\rd$ or $\Z^d$. Then for any bounded domain $D$, the random measure $[\Xi]_{| D}$ is almost surely determined by (i.e., is a measurable function of) the random measure $[\Xi]_{| D^\c}$.
\end{theorem}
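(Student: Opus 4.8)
\textbf{Proof strategy for Theorem \ref{rigidity}.}
A Radon measure on $D$ is determined by its integrals against $C_c^\infty(D)$, and the Borel $\sigma$-algebra on the space of such measures is generated by the evaluations $\nu \mapsto \int g\,\d\nu$, $g \in C_c^\infty(D)$; so it suffices to prove that for each $g \in C_c^\infty(D)$ the linear statistic $\int g\,\d\Xi$ is measurable with respect to $\sigma\big([\Xi]_{|D^\c}\big)$. Passing to the centred measure $\bar\Xi := \Xi - \rho\,\d x$ (with $\rho$ the intensity) only changes $\int g\,\d\Xi$ by the deterministic constant $\rho\int g$, so we may work with $\int g\,\d\bar\Xi$. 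Let $\mu$ denote the structure (Bartlett) spectral measure, characterised by $\var\big(\int h\,\d\bar\Xi\big) = \int |\widehat h(k)|^2\,\d\mu(k)$; the hypothesis that the structure function vanishes on the open set $U$ means exactly $\mu(U)=0$, i.e. $\mu$ is carried by $U^\c$. The map $h \mapsto \int h\,\d\bar\Xi$ then extends to a unitary isomorphism from $L^2(\mu)$ onto the first chaos $H_1 := \overline{\mathrm{span}}\{\int h\,\d\bar\Xi\}\subseteq L^2(\Omega)$, sending $h$ to $\widehat h$; surjectivity uses density of transforms of test functions in $L^2(\mu)$.

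Consider the closed subspace $H_D := \overline{\mathrm{span}}\big\{\int b\,\d\bar\Xi : b \in L^2(\rd),\ \mathrm{supp}(b)\subseteq D^\c\big\} \subseteq H_1$. Every generator equals $\int b\,\d[\bar\Xi]_{|D^\c}$, hence is $\sigma([\Xi]_{|D^\c})$-measurable; since the $\sigma([\Xi]_{|D^\c})$-measurable elements of $L^2(\Omega)$ form a closed subspace, so is every element of $H_D$. It therefore suffices to show $\int g\,\d\bar\Xi \in H_D$, and under the spectral isometry this becomes the density statement
\[
\overline{\big\{\,\widehat b \ :\ b \in L^2(\rd),\ \mathrm{supp}(b)\subseteq D^\c\,\big\}} = L^2(\mu).
\]
So let $h \in L^2(\mu)$ be orthogonal in $L^2(\mu)$ to every such $\widehat b$. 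By Parseval this says that the tempered distribution $h\mu$, a complex measure carried by $U^\c$, has inverse Fourier transform annihilating every $L^2$ function supported in $D^\c$; since $D$ is open, this forces $\mathrm{supp}\,\mathcal F^{-1}(h\mu) \subseteq \overline D$, in particular $\mathcal F^{-1}(h\mu)$ is compactly supported.

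Now comes the decisive point. Because $\overline D$ is bounded, the Paley--Wiener--Schwartz theorem applied to the compactly supported distribution $\mathcal F^{-1}(h\mu)$ shows that $h\mu = \mathcal F\big(\mathcal F^{-1}(h\mu)\big)$ agrees on $\rd$ with the restriction of an entire function; in particular $h\mu$ is given by a real-analytic function on $\rd$. But $h\mu$ is carried by $U^\c$, so this real-analytic function vanishes on the nonempty open set $U$, hence vanishes identically by the identity theorem, whence $h = 0$ in $L^2(\mu)$. This establishes the density, so $\int g\,\d\bar\Xi \in H_D$, completing the argument. The discrete case $\Z^d$ is identical, with wave space the torus $\td$ and ``band-limited'' replaced by ``trigonometric polynomial'': $\mathcal F^{-1}(h\mu)$ finitely supported makes $h\mu$ a trigonometric polynomial, again real-analytic on $\td$, so vanishing on an open subset forces it to be zero.

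The steps that demand care rather than new ideas are the spectral representation of a general translation invariant random measure — in particular the unitarity of $h \mapsto \int h\,\d\bar\Xi$ and the density of $\widehat{C_c^\infty}$ in $L^2(\mu)$ — and pushing the Parseval computation through at the level of tempered distributions when $\mu$ carries a singular part (Bragg peaks), so that $h\mu$ must be manipulated as a distribution. The conceptual core, however, is the elementary observation that a distribution whose Fourier transform is compactly supported is real-analytic and so cannot vanish on any nonempty open set unless it vanishes identically; this is precisely what converts a spectral gap on an \emph{open} set into exact determinacy on every \emph{bounded} region, and it makes transparent why boundedness of $D$ and openness of $U$ are both indispensable.
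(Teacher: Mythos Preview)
Your argument is correct and takes a genuinely different route from the paper's. The paper proceeds constructively: it writes down explicit test functions $\ph(x,\th,\b)=C\,e^{i\langle\th,x\rangle}\prod_i\big(\sin(\b x(i))/\b x(i)\big)^2$ whose Fourier transforms are Fej\'er--type bumps supported in the gap $U$, so each $\int\ph\,\d\Xi$ has zero variance and is thus a.s.\ equal to its mean; splitting the integral over $D$ and $D^\c$ and letting $\b\to 0$ along rationals recovers $\int_D e^{i\langle\th,x\rangle}\,\d\Xi$ as a measurable function of the outside, and then analyticity of the characteristic function of a compactly supported measure finishes. Your approach is instead an abstract $L^2$/spectral one: via the Cram\'er isometry $H_1\cong L^2(\mu)$, the question becomes density of $\{\widehat b:\mathrm{supp}\,b\subset D^\c\}$ in $L^2(\mu)$, which you settle by Paley--Wiener--Schwartz and the identity theorem.

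Both proofs ultimately hinge on the same analytic principle---a compactly supported object has an entire Fourier transform---but they deploy it at different places. The paper's method is elementary and explicit (one sees concretely which outside functionals recover the inside), and avoids any discussion of the spectral isometry or of $h\mu$ as a tempered distribution. Your method is cleaner conceptually, handles a singular spectral measure without modification, and makes transparent why openness of $U$ and boundedness of $D$ are exactly the hypotheses needed. The technical caveats you flag (temperedness of $h\mu$, distributional Parseval) are genuine but routine once $\mu$ is known to be a nonnegative tempered measure, which it is by Bochner.
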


This leads to the following conclusion:

\begin{corollary}
\label{tail}
Let $\Xi$ be a generalized stealthy random measure on $\rd$ or $\Z^d$. Then $\Xi$ is completely determined by (that is, measurable with respect to) its tail sigma field. 
\end{corollary}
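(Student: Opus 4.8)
The plan is to obtain Corollary \ref{tail} as an essentially formal consequence of the maximal rigidity established in Theorem \ref{rigidity}. Write $\mathcal{T}$ for the tail sigma-field of $\Xi$, which we understand as $\mathcal{T} = \bigcap_{R>0}\sigma\big([\Xi]_{|B(0;R)^\c}\big)$; since these sigma-fields are nonincreasing in $R$ (removing a larger ball retains less information), $\mathcal{T}$ also equals $\bigcap_{n\in\N}\sigma\big([\Xi]_{|B(0;n)^\c}\big)$. The goal is to show that $\sigma(\Xi)$ and $\mathcal{T}$ coincide modulo $\P$-null sets; since $\Xi$ is trivially $\sigma(\Xi)$-measurable, this gives the claim.

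The first step is to fix $n$ and apply Theorem \ref{rigidity} with $D=B(0;n)$: this says that $[\Xi]_{|B(0;n)}$ is almost surely a measurable function of $[\Xi]_{|B(0;n)^\c}$. Decomposing $\Xi = [\Xi]_{|B(0;n)} + [\Xi]_{|B(0;n)^\c}$ along the measurable partition of space into the closed ball and its complement, we get $\sigma(\Xi) = \sigma\big([\Xi]_{|B(0;n)}\big)\vee\sigma\big([\Xi]_{|B(0;n)^\c}\big)$, and hence $\sigma(\Xi)\subseteq \sigma\big([\Xi]_{|B(0;n)^\c}\big)$ modulo $\P$-null sets. Concretely: for every $n$ and every $A\in\sigma(\Xi)$ there is $A_n\in\sigma\big([\Xi]_{|B(0;n)^\c}\big)$ with $\P(A\triangle A_n)=0$.

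The second step, and the only point requiring genuine care, is to interchange the ``modulo null'' qualifier with the intersection over $n$: knowing that $A$ lies in each $\sigma\big([\Xi]_{|B(0;n)^\c}\big)$ up to a null set does not a priori place $A$ in $\bigcap_n\sigma\big([\Xi]_{|B(0;n)^\c}\big)$ up to a null set, because the exceptional null set could depend on $n$. I would handle this by a standard extraction: set $A^\ast=\limsup_n A_n=\bigcap_{N\ge 1}\bigcup_{n\ge N}A_n$. For $n\ge N$ we have $\sigma\big([\Xi]_{|B(0;n)^\c}\big)\subseteq\sigma\big([\Xi]_{|B(0;N)^\c}\big)$, so $\bigcup_{n\ge N}A_n\in\sigma\big([\Xi]_{|B(0;N)^\c}\big)$; as these unions are nonincreasing in $N$, for any fixed $M$ the set $A^\ast=\bigcap_{N\ge M}\bigcup_{n\ge N}A_n$ is a countable intersection of members of $\sigma\big([\Xi]_{|B(0;M)^\c}\big)$, hence lies in it, and therefore $A^\ast\in\mathcal{T}$. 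Moreover $\P(A\triangle A_n)=0$ for all $n$ gives $\mathbf{1}_{A_n}=\mathbf{1}_A$ $\P$-a.s. simultaneously in $n$, so $\mathbf{1}_{A^\ast}=\limsup_n\mathbf{1}_{A_n}=\mathbf{1}_A$ a.s., i.e. $\P(A\triangle A^\ast)=0$. (One can phrase the same step via Lévy's downward martingale convergence theorem: $\E[\mathbf{1}_A\mid\sigma([\Xi]_{|B(0;n)^\c})]\to\E[\mathbf{1}_A\mid\mathcal{T}]$ a.s., while the left-hand side equals $\mathbf{1}_A$ a.s. by the first step, forcing $\mathbf{1}_A$ to be $\mathcal{T}$-measurable.)

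Putting the two steps together, every $A\in\sigma(\Xi)$ agrees up to a $\P$-null set with an element of $\mathcal{T}$, so $\sigma(\Xi)$ is contained in the $\P$-completion of $\mathcal{T}$; the reverse inclusion is trivial. Hence $\Xi$ is measurable with respect to its tail sigma-field, as claimed. The substantive content is entirely carried by Theorem \ref{rigidity}; the remainder is the measure-theoretic bookkeeping above, and the one step I would be most careful to write out correctly is the $\limsup$ extraction that commutes the null-set modification with the countable intersection defining $\mathcal{T}$.
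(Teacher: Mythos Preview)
Your proposal is correct and follows the approach the paper intends: the paper presents Corollary~\ref{tail} as an immediate consequence of Theorem~\ref{rigidity} (``This leads to the following conclusion'') without supplying any further argument. You have simply made explicit the measure-theoretic bookkeeping the paper leaves implicit, in particular the $\limsup$ extraction that allows one to pass from ``$A$ lies in each $\sigma([\Xi]_{|B(0;n)^\c})$ up to a null set'' to ``$A$ lies in $\mathcal{T}$ up to a null set''; this care is appropriate and the argument is sound.
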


We further show that, to have maximal rigidity in the sense discussed above, it suffices that the structure function vanishes faster than any polynomial at some point in the wave space:

\begin{theorem}
\label{fast-decay}
Let $\Xi$ be a  random measure on $\rd$ or $\Z^d$ such that the structure function of $\Xi$ vanishes faster than any polynomial at the origin in the wave space. Then  for any bounded domain $D$, almost surely the random measure $[\Xi]_{| D}$ is determined by (i.e., is a measurable function of) the random measure $[\Xi]_{| D^\c}$. Consequently,  $\Xi$ is completely determined by (that is, measurable with respect to) its tail sigma field. 
\end{theorem}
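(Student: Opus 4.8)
\emph{Proof proposal.}
The plan is to reduce everything to the linear statistics of $\Xi$ and a single Fourier-analytic density statement. Fix a bounded domain $D$; it suffices to show that for every $\varphi\in C_c^\infty(D)$ the random variable $\Xi(\varphi)=\int\varphi\,\d\Xi$ is measurable with respect to the completed $\sigma$-field $\mathcal F^{\out}:=\overline{\sigma([\Xi]_{|D^\c})}$, since $[\Xi]_{|D}$ is determined by the values of $\Xi(\varphi)$ over a countable family of such $\varphi$, and the assertion about the tail $\sigma$-field then follows by taking $D=B(0,n)$ and letting $n\to\infty$. The engine is the familiar ``variance $\to 0$'' device: if for some sequence $h_n\in C_c^\infty(\text{int }D^\c)$ we have $\var\big(\Xi(\varphi-h_n)\big)\to 0$, then, writing $\Xi(\varphi)=\big(\Xi(\varphi-h_n)-\E\Xi(\varphi-h_n)\big)+\big(\Xi(h_n)+\E\Xi(\varphi-h_n)\big)$, the first summand tends to $0$ in $L^2$ while the second is $\mathcal F^{\out}$-measurable (being $\Xi(h_n)$ plus a deterministic constant), so $\Xi(\varphi)$ is an $L^2$-limit of $\mathcal F^{\out}$-measurable random variables, hence $\mathcal F^{\out}$-measurable. (Square-integrability of all these statistics is part of the standing assumption that the structure function exists.)

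Since $\var(\Xi(g))=c\int|\widehat g(k)|^2\,\d\mu_S(k)$, where $\mu_S$ is the (positive, translation-invariant, tempered) structure measure of $\Xi$ — with $\R^d$ replaced by the torus $\td$ in the $\Z^d$ case — the requirement $\var(\Xi(\varphi-h_n))\to 0$ is exactly that $\widehat{h_n}\to\widehat\varphi$ in $L^2(\mu_S)$. So the theorem reduces to: \emph{$\widehat\varphi$ lies in the $L^2(\mu_S)$-closure of $\{\widehat h:h\in C_c^\infty(\text{int }D^\c)\}$}; equivalently, no nonzero $\ell\in L^2(\mu_S)$ is orthogonal to every $\widehat h$, $h\in C_c^\infty(\text{int }D^\c)$.

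To prove this, suppose $\ell\in L^2(\mu_S)$ is orthogonal to all such $\widehat h$. As $\mu_S$ is tempered, Cauchy--Schwarz gives $\int_{B(0,r)}|\ell|\,\d\mu_S\le\|\ell\|_{L^2(\mu_S)}\,\mu_S(B(0,r))^{1/2}=O(r^{k})$ for some $k$, so $\tau:=\overline\ell\,\mu_S$ is a tempered measure and $\widehat\tau$ a tempered distribution; the orthogonality says precisely that $\langle\widehat\tau,h\rangle=0$ for all $h\in C_c^\infty(\text{int }D^\c)$, i.e. $\mathrm{supp}\,\widehat\tau\subseteq\overline D$. Thus $\widehat\tau$ is a compactly supported distribution, so by the Paley--Wiener--Schwartz theorem $\tau$ is given by a real-analytic function $F$ on $\R^d$ (in the $\Z^d$ case $\widehat\tau$ is finitely supported and $\tau$ is simply a trigonometric polynomial, hence real-analytic on $\td$). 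Now I invoke the hypothesis: near the origin $\mu_S=S\,\d k$ with $S(k)=o(|k|^N)$ for every $N$, hence $\mu_S(B(0,r))=o(r^M)$ for every $M$; since $F\,\d k=\overline\ell\,\mu_S$ as measures, this forces $\int_{B(0,r)}|F|\,\d k=\int_{B(0,r)}|\ell|\,\d\mu_S\le\|\ell\|_{L^2(\mu_S)}\,\mu_S(B(0,r))^{1/2}=o(r^M)$ for every $M$. A real-analytic function whose $L^1$ mass on $B(0,r)$ decays faster than every power of $r$ must have all its Taylor coefficients at $0$ equal to zero, hence vanish on a neighbourhood of $0$ and, by connectedness of $\R^d$ (resp.\ $\td$), vanish identically; so $\tau=\overline\ell\,\mu_S=0$, i.e. $\ell=0$ in $L^2(\mu_S)$, as required. (With ``$F$ vanishes on a nonempty open set'' in place of ``$F$ vanishes to infinite order at the origin,'' the very same argument reproves Theorem~\ref{rigidity}.)

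The step needing the most care — and where the hypothesis is genuinely used — is the chain ``$\mathrm{supp}\,\widehat\tau$ compact $\Rightarrow$ $\tau$ real-analytic $\Rightarrow$ $\tau\equiv 0$'': one must check that $\tau$ is honestly tempered so that the Fourier transform and Paley--Wiener--Schwartz apply (this is the role of the polynomial growth of $\mu_S$), and one must correctly turn ``$S$ vanishes faster than any polynomial at the origin'' into the infinite-order vanishing of $F$ without assuming $\ell$ bounded, which is exactly what the Cauchy--Schwarz estimate above achieves. The remaining matters — that measurability of all $\Xi(\varphi)$, $\varphi\in C_c^\infty(D)$, upgrades to measurability of the random measure $[\Xi]_{|D}$, and then, by exhausting space by balls, to measurability with respect to the tail $\sigma$-field — are routine.
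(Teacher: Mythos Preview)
Your proof is correct and takes a genuinely different route from the paper's. The paper proceeds constructively: it fixes a bump function $\varphi\equiv 1$ on $D$, considers the monomial-weighted rescalings $L^{|k|}x^{[k]}\varphi(x/L)$, and uses a direct variance estimate (Proposition~\ref{infdec}) together with Borel--Cantelli along $L=2^n$ to show that each moment $\int_D x^{[k]}\,\d[\Xi]$ is almost surely recoverable from $[\Xi]_{|D^\c}$; the compactly supported measure $[\Xi]_{|D}$ is then determined by its moments. Your argument, by contrast, is a duality argument in $L^2(\mu_S)$: you reduce rigidity to the density of $\{\widehat h:h\in C_c^\infty(\mathrm{int}\,D^\c)\}$ in $L^2(\mu_S)$, and establish this by showing that any orthogonal $\ell$ gives rise to a tempered measure $\overline\ell\,\mu_S$ whose Fourier transform is compactly supported, hence (by Paley--Wiener--Schwartz) equal to an entire function $F$; the fast vanishing of $S$ at the origin forces $F$ to vanish there to infinite order, so $F\equiv 0$ by real-analyticity.

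What each approach buys: the paper's argument is more elementary---no Paley--Wiener--Schwartz, no density/duality---and makes the approximating outside statistics completely explicit. Your argument is cleaner and more conceptual, yields the stronger statement that $\{\widehat h\}$ is actually dense in $L^2(\mu_S)$ (not just that one particular $\widehat\varphi$ can be approximated), and, as you note, unifies the proofs of Theorems~\ref{rigidity} and~\ref{fast-decay} under a single mechanism (spectral gap $\Rightarrow$ $F$ vanishes on an open set; infinite-order vanishing $\Rightarrow$ $F$ vanishes to infinite order at a point; either way real-analyticity kills $F$). The one place to be slightly more careful is in justifying that $\mu_S$ has polynomially bounded mass on balls (so that $\tau$ is tempered and Paley--Wiener--Schwartz applies); this is standard for the spectral measure of a second-order stationary process, but it is a genuine ingredient.
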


In the special case of the 1D discrete setting, that is $\Z$-valued processes on $\Z$, our results can also be seen as a consequence of a remarkable recent theorem of Borichev, Sodin and Weiss \cite{BSW}. They show that, in the 1D discrete setting, the Fourier transform of the covariance function vanishing on a sub-interval of $\t$ (which is the wave space in this case) implies that the process is essentially periodic. However, the methods of \cite{BSW}, which rely strongly on complex analysis of one variable, seem to be rather specially suited for the 1D discrete case, and in \cite{BSW} the authors raise the question as to whether one can expect similar results in the discrete setting in higher dimensions (that is, $\Z$-valued processes on $\Z^d$ for $d > 1$). This leaves out of its purview point processes in the continuum in any dimension (that is, on $\rd$ for $d \ge 1$). For continuum random fields or random measures on $\Z^d$ or $\rd$, periodicity is not true for stealthy processes as evidenced by the key example of Gaussian processes, already noted in \cite{BSW} and discussed in Section \ref{sec:Gaussian process}. This is an open question for particle systems; see \cite{Stealthy-1} for physical arguments and numerical evidence regarding the existence of SH disordered particle systems.

The question of inference about a stochastic process from its diffraction spectrum has a long history in  diffraction theory, and we believe the results in the present article would be of interest to that body of literature. For the reader interested in further exploration of this direction, we refer to the survey \cite{KinView} and the more recent article \cite{ContDiff}.

\section{Setup and basic notions}
\label{setmod}
In this section, we formally define the setup in which we are working.

At the very outset, let us introduce the notation that $\Omega$ denotes the wave space. This would imply that $\Omega=\td$ when the physical space is $\Z^d$, and $\Omega=\rd$ when the physical space is $\rd$. The value of $\Omega$ will usually be clear from the context. On a related note, we will denote functions that live on the wave space with a cap, and we will denote functions that live on the physical space without a cap.

Let $\S$ denote the space of complex Borel measures on $\rd$ (resp., $\Z^d$). Then a random measure on $\rd$ is a random variable that takes values in the space $\S$. Since a locally finite point configuration can be identified with its counting measure, a point process is a special case of a random measure. It also covers locally $L_1$ random fields on $\rd$, by identifying a random field $\{X_u\}_{u \in \rd}$ with the random measure $X_u \d u$ (where $\d u$ is the Lebesgue measure on $\rd$). If $\Xi$ is a random measure, then by $\d [\Xi]$ we will denote a specific realization of the random variable $\Xi$.

A random measure is characterized by the distribution of its linear statistics, namely the random variables $\int \ph(x) \d[\Xi](x)$ for $C_c^\infty$ test functions $\ph$ on $\rd$. A random measure $\Xi$ is called translation invariant if, for any $u \in \rd$, the distribution of the measure $\d [\Xi] (\cdot-u)$ is the same as that of $\d [\Xi](\cdot)$.

Translation invariance of $\Xi$ implies that $\E\l[\d [\Xi]\r]=\rho \d u$, where $\rho$ is the intensity of the random measure and $\d u$ denotes the Lebesgue measure.

The covariance functional $\beta$ of a random field $\Xi$ is defined by the equation, for any two test functions $\ph$ and $\psi$:
\[  \cov \l[ \int \ph \d [\Xi], \int \psi \d [\Xi] \r]  = \int \int \ph(s) \psi(t) \beta(s-t) \d s \d t. \] In general,  the covariance functional $\d \beta(s,t)$ will be a measure (that is a constant times the Lebesgue measure on the sets $s-t=\mathrm{const.}$ because of translation invariance).  We assume here for simplicity that this measure, in fact, has a density with respect to the Lebesgue measure $\d s \d t$ on $\rd \times \rd$.

As a special case of this, for a translation invariant point process $\Xi$, we denote by $G$ the fully truncated pair correlation  function, given by 
\[G(x-y)=\rho_2(x-y) - \rho^2 + \rho \del(x-y),\] 
where $\rho_2$ is the two point correlation function of $\Xi$ and $\rho$ is its (one point) intensity. It can be verified via a simple computation that $G$ is the same as $\beta$ as discussed above.

The Fourier transform $\h{\beta}$ of the covariance functional $\beta$ is the structure function $S(k)$ (or, more generally, the structure measure $\d S(k)$). 


We now formally define hyperuniform and stealthy hyperuniform random measures.

\begin{definition}
	\label{hyperunif}
A point process is called \textsl{hyperuniform} if the structure function $S$ vanishes at the origin.
\end{definition}
Equivalently, it can be shown that a point process is hyperuniform if and only if the variance of the particle number in a domain $D$ grows slower than the volume $|D|$ (and not like the volume, as would be the case for Poisson and other extensive point processes).  

We will focus on \textsl{Stealthy} hyperuniform processes, which are defined as follows:
\begin{definition}
	\label{stealthy}
	A point process is called \textsl{stealthy hyperuniform} (SH)  if the structure function $S$ vanishes in a neighbourhood of the origin.
\end{definition} 

Finally, we have \textsl{Generalized Stealthy} processes:
\begin{definition}
\label{gensth}
A point process (or random measure) on $\rd$ (resp., $\Z^d$) is called a generalized stealthy (GS) process if its structure function $S$ vanishes in an open set $U \subset \rd$ (resp. $\td$).
\end{definition}
The crucial difference between stealthy and generalized stealthy processes is that, for a generalized stealthy process, the gap $U$ in the diffraction spectrum need not contain the origin.

\section{Universal bounds on hole sizes}
\label{hole-size}
Our goal in this section is to prove Theorem \ref{bounded-holes}. Let $\Xi$ be a stealthy process on $\R^d$ or $\Z^d$, whose structure function vanishes in a neighbourhood $U$ of the origin in the wave space. Let $b$ denote the radius of the largest ball, with centre at the origin, that is contained in $U$.

\subsection{Linear statistics}
\label{linstat}
We consider an $L_2$ function  $\ph$, and the linear statistic $I(\ph)=\int \ph \d [\Xi]$ corresponding to $\ph$. We consider the variance of the linear statistic which, for translation invariant point processes, can be written as
\begin{equation}
\var[I(\ph)]
=  \int_\O |\h{\ph}(\xi)|^2 S(\xi) \d \xi.
\end{equation}
Now, if $S(\xi)$ vanishes in a neighbourhood $U \subset \Omega$, and if $\ph$ is such that  $\h{\ph}$ is supported on $U$, we have $\var[I(\ph)]=0$. This implies that a.s. we have
\begin{equation}
\label{fund}
\int \ph \d [\Xi]  = \E[I(\ph)] = \rho \h{\ph}(0).
\end{equation}
This is a crucial observation in the context of stealthy processes with important implications, as we shall see in the ensuing sections.

\subsection{Anti-concentration of particles in stealthy processes}
\label{antic}
In this section, we show a rather surprising ``anti-concentration'' property of the particles in a stealthy hyperuniform point process. Though it arises contextually in the proof of boundedness of holes in such processes, we believe that this property is interesting in its own right.

\begin{proof}[Proof of Theorem \ref{anti-conc}]
We observe that, since the process $\Xi$ is translation invariant, it suffices to consider the cube (in which we want to prove the anti-concentration) to be centered at the origin.

We first choose an auxiliary test function $\h{\psi}$ on $\O$ such that
 $\h{\psi}$ is a $C_c^\infty$ non-negative radial function supported on the ball of radius $1/3$ centered at the origin. This has several consequences, which we enumerate below: 
\begin{itemize}
\item Since $\h{\psi}(\xi)$ is invariant under the map $\xi \mapsto -\xi$, we deduce that $\psi$ is real valued. \item Since $\h{\psi}$ is a radial function, so is $\psi$.  
\item Since $\h{\psi}$ is non-negative, we have $\psi(0)=\int \h{\psi}(\xi) \d \xi >0$. 
\item Since $\h{\psi}$ is $C_c^\infty$, we deduce that $\psi$ is a Schwarz function, and hence in fact $\psi>0$ on a neighbourhood of the origin.  
\item Dividing $\h{\psi}$ by a positive constant if necessary, we may assume that $\psi \ge 1$ on a cube of side $a>0$  centered at the origin.
\end{itemize} 
 
 We now select $\ph$ to be the function such that $\h{\ph}(\xi)=(\h{\psi} * \h{\psi})(\xi/b)$. Clearly, $\ph(x)=b^d[{\psi}(bx)]^2$.  

We observe that :
\begin{itemize}
\item $\h{\psi} * \h{\psi}$ is supported inside the ball of radius $2/3$ (centered at the origin) in the wave space. Consequently, $\h{\ph}$ is supported in the ball of radius $\frac{2}{3}b$ centered at the origin, and hence its support $\subset U$ (which contains a ball having center at the origin and radius $b$).
\item $\ph$ is a non-negative radial Schwarz function.
\item $\h{\ph}(0)=\int \ph(x) \d x )>0$.
\item $\ph \ge b^d$ on a cube $B$ centred at the origin and having side length $ab^{-1}$, where $a$ is as defined while enumerating the properties of $\psi$.
\end{itemize}
Due to the fact that the support of $\h{\ph}$ is contained inside $U$, the function $\ph$ satisfies \eqref{fund} a.s. This, together with the fact that $\ph \ge 0$, implies that 
\[ \sum_{x \in \Xi \cap B} \ph(x) \le \rho \h{\pi}(0)=\h{\psi} * \h{\psi}(0). \]
This, in turn, implies that 
\[ |\Xi \cap B| \cdot b^d \le \rho \h{\psi} * \h{\psi}(0).  \]
Consequently, we have
\[  |\Xi \cap B|  \le \h{\psi} * \h{\psi}(0) \cdot  \rho b^{-d},\] as desired.

\end{proof}

\subsection{Proof of Theorem \ref{bounded-holes}}
\label{bdd-sec}
In this section, we use the anti-concentration results of the previous section in order to establish boundedness of holes for stealthy hyperuniform point processes.

For the rest of this section, we will work on the event (having probability 1) where anti-concentration of particle number holds simultaneously for all cubes of side length $ab^{-1}$ having  centers with co-ordinates that are rational multiples of $\sqrt{d}$. 

In what follows, we will denote by $A(r,s)$ the $L_\infty$ annulus $\{ x \in \rd : r \le \|x\|_\infty <s \}$.
By $\theta$ we will denote $Cb^{-1}$, where $C$ and $b$ are as in the statement of Theorem \ref{anti-conc}.
Then for parameter$R>0$, to be thought of as a large parameter, we will consider the decomposition \[ \rd = A(0,R \theta) \bigcup \cup_{j=1}^\infty A(R\theta+j\theta,R\theta+(j+1)\theta). \] 
In the remainder of this section, we will denote $ A(0,R \theta) $ by $A_0$ and $A(R\theta+j\theta,R\theta+(j+1)\theta)$ by $A_j$ for $j \ge 1$.

We consider the equation \eqref{fund} for $\ph$ (chosen as in Section \ref{antic}), and observe that we can rewrite the same as 
\begin{equation}
\label{fund-cont-re}
\l( \sum_{x \in \Xi \cap A_0} \ph(x) \r) + \sum_{j=1}^{\infty} \l( \sum_{x \in \Xi \cap A_j} \ph(x) \r) = \rho \h{\ph}(0). 
 \end{equation}
Let $R$ be such that the event $F_R:=\{ |\Xi \cap A_0|=0 \}$ has  positive probability. Then, on the event $F_R$, \eqref{fund-cont-re} reduces to 
\begin{equation}
\label{fund-cont-re1}
\sum_{j=1}^{\infty} \l( \sum_{x \in \Xi \cap A_j} \ph(x) \r) = \rho \h{\ph}(0). 
\end{equation}
Since $\ph$ is a Schwarz function, we have $|\ph(x)| \le C_\ph |x|^{-d-1}$. 
We now proceed to understand the quantity $C_\ph$ for our choice of $\ph$. We begin with the observation that $C_\ph= \| |x|^{d+1} \ph \|_\infty$. Going to the Fourier space, this can be bounded above as
\[ C_\ph  = \||x|^{d+1} \ph \|_\infty \le  \|D^{d+1} (\h{\ph})\|_1, \] where $D$ is the radial derivative. But recall that  \[\h{\ph}(\cdot)=(\h{\psi} * \h{\psi})(\cdot/b),\] so $\|\h{\ph}\|_1=b^d\|\h{\psi} * \h{\psi} \|_1$, and $\|D^{d+1}\h{\ph}\|_1=b^{-1}\|D^{d+1}(\h{\psi} * \h{\psi})\|_1$. Here we recall the fact that $\psi$ is a universal choice (that is, not dependent on any parameters of the process $\Xi$), and hence all norms of $\h{\psi} * \h{\psi}$ are universal constants in the context of our  problem.
Putting together the above observations, we have $|\ph(x)| \le C b^{-1} |x|^{-d-1}$, where $C$ is a universal constant. Consequently, 
\[ \l| \sum_{x \in \Xi \cap A_j} \ph(x) \r| \le C b^{-1} |\Xi \cap  A_j|[(R+j)\theta]^{-d-1} , \]
which implies that on the event $F_R$ we have
\begin{equation}
\label{cont-fund-re2}
0<\rho \h{\ph}(0) \le \sum_{j=1}^\infty |\Xi \cap  A_j|[(R+j)\theta]^{-d-1}
\end{equation}

Let  $\G(x,\theta)$ denote the cube with centre $x \in \R^d$ and side-length $\theta$.
We can then write  $A_j= A((R+j)\theta,(R+(j+1))\theta)$ as a disjoint union
\[ A_j = \cup_{i=1}^N \G(x_i,\theta),\]  such that the centres $x_i$ have co-ordinates that are rational multiples of $\sqrt{d}$ and $N=C[(R+j)\theta]^{d-1}$ for some positive number $C$. 

But the anti-concentration property of particle numbers (Theorem \ref{anti-conc}) implies that $|\Xi \cap \G(x_i, \theta)| \le c \rho b^{-d}$. This implies that 
\begin{equation}
\label{cont-fund-re3}
0<\rho \h{\ph}(0) \le c b^{-1} \sum_{j=1}^\infty  \rho b^{-d}  [(R+j)\theta]^{-d-1} \le c_1 b^{-d-1} \rho R^{-1} \theta^{-d-1} = c_2 \rho R^{-1}.  
\end{equation}
Consequently, $R \le c_3$, for some universal constant $c_3$. Hence the size of any hole is bounded above by $R \theta \le \kappa b^{-1}$  for some universal constant $\kappa$, as desired.


\section{Rigidity}
\label{rigidity-sec}
In this section, we prove Theorem \ref{rigidity}. 
\subsection{Point processes}
\label{rig-pp}
It suffices to prove the theorem when the domain in question is a ball. This is because, any bounded set is a subset of a large enough ball, and it is easy to see that if Theorem \ref{rigidity} holds for a domain $D$, then it holds for any subset of D.  By way of notation, let $B(x;r)$ be the ball of radius $r$ and centre $x$. We want to prove the theorem for the domain $B(0;a)$. We recall the notation that $\G(x,r)$ is the cube with center $x \in \O$ and side-length $r>0$.

We consider $\mu \in \O$ and $\b>0$ small enough so that $\G(\mu,2\b) \subsetneq U$, and  set $f_\b(\xi)={1}_{\G(\mu,2\b)} \star {1}_{\G(\mu,2\b)}(\xi)$. We consider the function $\ph(x) = \h{f_\b}(x)$. Clearly, $\h{\ph}$ is supported inside $U$, and hence satisfies \eqref{fund}. Moreover, for $\th \in \rd$ such that $\|\th\|_\infty$ is small enough, we have $\G(\mu,2\b)+\th \subset U$.  Therefore, if we consider $\h{f}(\xi,\th,b):=1_{\G(\mu,2\b)+\th} \star 1_{\G(\mu,2\b)+\th} (\xi)$, and set $\ph(x,\th,\b)={f}(x,\th,\b)$, then $\h{\ph}(\cdot,\th,\b)$ is supported inside $U$ for all $\th,\b$ small enough. Hence \eqref{fund} is satisfied a.s. We will work on the event (of probability 1) where all $\ph(\cdot,\th,\b)$ (with $(\th,\b)$ as above and having all co-ordinates rational ; a set that we will denote by $Q$) satisfy \eqref{fund} simultaneously. 
 

We notice that $\ph$ as above has the explicit expression 
\[ \ph(x,\th,\b) = C e^{i \langle \th, x \rangle} \prod_{i=1}^d \l( \frac{\sin \b x(i)}{\b x(i)} \r)^2, \] where $x=(x(1),\cdots,x(d)) \in \rd$.
\eqref{fund} can be rewritten for $\ph(\cdot,\th,\b)$ as 
\begin{equation}
\label{rig-cont-1}
\sum_{x \in \Xi \cap B(0;a)} \ph(x,\th,\b) = \rho \h{\ph}(0,\th,\b) - \sum_{x \in \Xi \cap B(0;a)^\c} \ph(x,\th,\b).
\end{equation}

Let $X_1,\cdots,X_N$ be the points inside $B(0;a)$. 
We deduce that the point configuration $\u$ outside $B(0;a)$ determines the quantities (for each $(\th,b) \in Q$) 
\begin{equation}
\label{rig-cont-2} 
\sum_{j=1}^N  e^{i \langle \th, X_j \rangle}   \prod_{i=1}^d \l( \frac{\sin \b X_j (i)}{\b X_j(i)} \r)^2 = F(\u,\th,\b)
\end{equation}
where $F$ is a measurable function.

\eqref{rig-cont-2} is true for all $(\th,\b)$ with rational co-ordinates that lie in a small neighbourhood of the origin.  Since the left hand side of \eqref{rig-cont-2} is continuous in $(\th,\b)$ for a fixed realization $(X_1,\cdots,X_N)$, \eqref{rig-cont-2} is true for $(\th,0)$ for all $\th$ such that $(\th,\b) \in Q$. for some $\b>0$. This leads us to  
\begin{equation}
\label{rig-cont-3} 
\sum_{j=1}^N  e^{i \langle \th, X_j \rangle}   = F(\u,\th)
\end{equation}
for a measurable function $F$.

Since the left hand side of \eqref{rig-cont-3} is continuous in $\th$, therefore we can take limits to deduce that \eqref{rig-cont-3} is true for all $\th$ in a neighbourhood of $0\in \rd$. But this determines the set $(X_1,\cdots,X_N)$ completely. A simple way to see this is that the left hand side of \eqref{rig-cont-3} is the characteristic function of the (compactly supported) counting measure corresponding to the set $\{X_1,\cdots,X_N\}$.

\subsection{Random fields and random measures}
\label{rig-rm}
The techniques of Section \ref{rig-pp}, although introduced for point processes,  can be extended to the setting where $\Xi$ is a translation invariant random field, or even a translation invariant random measure on $\R^d$ or $\Z^d$. In what follows, we discuss the natural extensions of the key concepts used in Section \ref{rig-pp} to the setting of random fields and random measures.





Following the argument for point processes, we consider $\b>0$ small enough so that $\G(\mu,2\b) \subsetneq U$, let  $\h{f}(\xi,\th,\b):=1_{\G(\mu,2\b)+\th} \star 1_{\G(\mu,2\b)+\th} (\xi)$, and set $\ph(x,\th,\b)={f}(x,\th,\b)$. Then $\h{\ph}(\cdot,\th,\b)$ is supported inside $U$ for all $\th,\b$ small enough. 
Clearly, $\ph$ as above has the explicit expression 
\[ \ph(x,\th,\b) = C e^{i \langle \th, x \rangle} \prod_{i=1}^d \l( \frac{\sin \b x(i)}{\b x(i)} \r)^2, \] where $x=(x(1),\cdots,x(d)) \in \rd$.
 
 We note that for each $(\th,\b)$ the function $\ph(\cdot,\th,\b) \in L_1(\d x)$, hence the linear statistic $\int \ph \d [\Xi]$ is well-defined and finite a.s. Exploiting the support properties of $\ph$,  we can write down an analogue of \eqref{fund} as 
\begin{equation}
\label{fund-cont-rm}
I(\ph(\cdot,\th,\b))= \int \ph(x,\th,\b) \d [\Xi] = \E[I(\ph)] = \rho \h{\ph}(0,\th,\b).
\end{equation}
We immediately obtain an analogue of \eqref{rig-cont-1} as
\begin{equation}
\label{rig-cont-1-rm}
\int_{B(0;a)} \ph(x,\th,\b) \d [\Xi] = \rho \h{\ph}(0,\th,\b) - \int_{B(0;a)^\c} \ph(x,\th,\b) \d [\Xi].
\end{equation}
Let $\Xi^{\inn}$ and $\Xi^{\out}$ respectively denote the restrictions of the random measure $\Xi$ to $B(0;a)$ and $B(0;a)^\c$. Then we want to infer about $\Xi^{\inn}$ given $\Xi^{\out}$.
Proceeding as in the case of point processes, we eventually deduce that 
\begin{equation}
\label{rig-cont-3-rm}
\int e^{i \langle \th,x \rangle} \d [\Xi^\inn] = F(\Xi^\out,\th)
\end{equation}
for all $\th$ in a neighbourhood of 0, $F$ being a measurable function. But a.s. $\Xi^\inn$ is a finite Borel measure supported on the compact set $B(0;a)$. Consequently, the characteristic function $\int e^{i \langle \th,x \rangle} \d [\Xi^\inn]$ admits an extension to a holomorphic function  in $\th \in \C^d$. But for a holomorphic function, its values in any (real) neighbourhood determines the function completely. Hence \eqref{rig-cont-3-rm} completely determines the characteristic function of $\Xi^\inn$ (for any given realization), and hence completely determines the measure $\Xi^\inn$.

\subsection{Rigidity under fast decay of the structure function}
\label{sec:fastdecay}
In this section, we show that if the structure function of an invariant point process (or more generally, an invariant random field or random measure) on $\Z^d$ or $\R^d$ vanishes at 0 faster than any polynomial, then the process in question exhibits maximal rigidity. 

Our main tool will be the following proposition:
\begin{proposition}
\label{infdec}
Let $\Xi$ be  an invariant point process (or more generally, an invariant random field or random measure) on $\Z^d$ or $\R^d$ whose structure function $S(k)$ vanishes at $k=0$ faster than any polynomial. Then, for any Schwarz function $\ph$, positive integer $m$ and $L>1$, the random integral $\int \ph(z/L) \d [\Xi](z)$ satisfies 
\begin{equation}
\label{infdeceq}
\var [\int \ph(z/L) \d [\Xi](z)] \le C(\ph,m) L^{-m},
\end{equation}
for some positive number $C(\ph,m)$.
\end{proposition}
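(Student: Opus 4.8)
The plan is to use the Plancherel/Parseval identity for the variance of a linear statistic, exactly as in the formula $\var[I(\ph)] = \int_\O |\h\ph(\xi)|^2 S(\xi)\,\d\xi$ recorded in Section \ref{linstat}, and then exploit the localization in Fourier space produced by the rescaling $z \mapsto z/L$. First I would observe that if $\ph_L(z) := \ph(z/L)$, then $\h{\ph_L}(\xi) = L^d \h\ph(L\xi)$, so that
\begin{equation}
\label{plan-var}
\var\Big[\int \ph(z/L)\,\d[\Xi](z)\Big] = \int_\O \big|L^d \h\ph(L\xi)\big|^2 S(\xi)\,\d\xi = L^d \int_\O |\h\ph(\eta)|^2 S(\eta/L)\,\d\eta,
\end{equation}
after the substitution $\eta = L\xi$ (with the appropriate Jacobian). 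The point is that the mass of $|\h\ph(\eta)|^2$ is essentially concentrated near $\eta = 0$, so the integral only sees $S$ in a region of size $\sim 1/L$ around the origin — and that is precisely where $S$ decays faster than any polynomial.

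The key steps, in order, are: (i) split the $\eta$-integral into the ``bulk'' region $\|\eta\| \le L^{1/2}$ (say) and the ``tail'' region $\|\eta\| > L^{1/2}$; (ii) on the tail, use that $\ph$ is Schwarz, so $|\h\ph(\eta)| \le C_N \|\eta\|^{-N}$ for every $N$, while $S(\eta/L) \le S_{\max}$ is bounded (the structure function is, say, locally bounded, or one uses the total mass bound) — choosing $N$ large gives a contribution bounded by $C L^d \cdot L^{-N/2}$ which is $\le C(\ph,m) L^{-m}$ for $N$ large enough; (iii) on the bulk, we have $\|\eta/L\| \le L^{-1/2}$, and the fast-decay hypothesis says $S(k) \le c_p \|k\|^p$ for every $p$ in a neighborhood of $0$, hence $S(\eta/L) \le c_p L^{-p}\|\eta\|^p$; plugging in, the bulk contribution is at most $c_p L^{d-p} \int_{\|\eta\|\le L^{1/2}} |\h\ph(\eta)|^2 \|\eta\|^p \,\d\eta \le c_p' L^{d-p}$ since $|\h\ph|^2 \|\eta\|^p$ is integrable (Schwarz); choosing $p = m+d$ finishes this piece. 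Combining (ii) and (iii) gives the claimed bound with $C(\ph,m)$ depending only on $\ph$ and $m$ (through finitely many Schwarz seminorms of $\ph$ and the decay constants of $S$).

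One technical point I would be careful about: the structure function is in general only a tempered measure $\d S$, not necessarily a bounded function, so in step (ii) I cannot literally write $S(\eta/L) \le S_{\max}$. The clean fix is to keep everything in terms of the measure $\d S$: rewrite \eqref{plan-var} as $L^{2d}\int |\h\ph(L\xi)|^2 \,\d S(\xi)$, note that $\d S$ has at most polynomial growth (it is tempered and positive, so $\d S(B(0;r)) \le C(1+r)^{A}$ for some $A$), and split the $\xi$-integral at $\|\xi\| = L^{-1/2}$. On $\|\xi\| > L^{-1/2}$ the Schwarz decay of $\h\ph(L\xi)$ beats any power of $L$ against the polynomially-growing $\d S$; on $\|\xi\| \le L^{-1/2}$ one uses a smoothed version of the fast-decay hypothesis, e.g. $\d S(B(0;r)) = o(r^p)$ for every $p$, which is the natural measure-theoretic formulation of ``$S$ vanishes faster than any polynomial at $0$'' and which I would state explicitly as the hypothesis. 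The main obstacle is thus purely bookkeeping: making the ``$S$ vanishes faster than any polynomial'' hypothesis precise at the level of measures, and tracking that the bulk estimate genuinely requires only the local behavior of $\d S$ near the origin while the tail estimate requires only temperedness; there is no analytic difficulty beyond that. Once \eqref{infdeceq} is in hand, the rigidity conclusion of Theorem \ref{fast-decay} follows by the same argument as in Sections \ref{rig-pp}--\ref{rig-rm}: take $\ph$ a fixed bump equal to $1$ on $B(0;a)$, let $L \to \infty$ along a sequence with $\sum_L C(\ph,m) L^{-m} < \infty$ (possible since $m$ can be taken $\ge 2$), apply Borel--Cantelli to conclude $\int \ph(z/L)\,\d[\Xi](z)$ concentrates at its mean, and then run the Fourier-uniqueness / analyticity argument (localized test functions $e^{i\langle\th,x\rangle}\prod_i(\sin\b x(i)/\b x(i))^2$, or their rescalings) to recover $[\Xi]_{|D}$ from $[\Xi]_{|D^\c}$.
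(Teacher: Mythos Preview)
Your proposal is correct and follows essentially the same approach as the paper: express the variance via the Plancherel identity $\int |\h{\ph_L}(\xi)|^2 S(\xi)\,\d\xi$, use the scaling $\h{\ph_L}(\xi)=L^d\h\ph(L\xi)$, and split the integral into a region near the origin (where the super-polynomial decay of $S$ is used) and a region away from it (where the Schwarz decay of $\h\ph$ dominates). The only cosmetic differences are that the paper splits at $|\xi|=\eps_m L^{-1}$ rather than your $|\eta|=L^{1/2}$, and does not perform the change of variables; your added remark on handling $\d S$ as a tempered measure is a reasonable refinement the paper does not make explicit.
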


\begin{proof}
Let $\eps_m < 1 $ be such that $|S(\xi)| \le C_m |\xi|^m$ for $|\xi| \le \eps_m$.
We recall the notation that $\ph_L(z)=\ph(z/L)$
\[ \var[ \int \ph(z/L) \d [\Xi](z)] ] = \int_{\R^d} |\hat{\ph_L}(\xi)|^2 S(\xi) \d \xi . \]
Now, $\hat{\ph_L}(\xi)=L^d\hat{\ph}(L \xi)$. Consequently, because $\ph$ is a Schwarz function, we have $|\hat{\ph_L}(\xi)| \le C(\ph,m)L^{-2d-m}(1+|\xi|)^{-2d-m}$. We estimate the integral from above by breaking it into two parts : $|\xi| \le \eps_m L^{-1}$ and $|\xi| > \eps_m L^{-1}$.
 
For $|\xi| \ge \eps_m$, we upper bound the integral by 
\[\int_{|\xi|\ge \eps_m L^{-1}} |\hat{\ph_L}(\xi)|^2 |S(\xi)| \d \xi \le C(\ph,m) L^{-m} \int_{\R^d} |S(\xi)| (1+|\xi|)^{-2d-m} \d \xi \le C(\ph,m) L^{-m} .   \]

For $|\xi| \le \eps_m$, we upper bound the integral by 
\[\int_{|\xi|\le \eps_m L^{-1}} |\hat{\ph_L}(\xi)|^2 |S(\xi)| \d \xi \le L^{2d} C(\ph,0)C_{m+d} \int_{|\xi| \le \eps_m L^{-1}} |\xi|^{d+m} \d \xi \le C'(\ph,m) L^{-m}.   \]

Combining the two bounds, we deduce the statement of the proposition.

\end{proof}

Now we are ready to prove Theorem \ref{fast-decay}.

\begin{proof}[Proof of Theorem \ref{fast-decay}]
Let $D$ be a bounded domain in $\R^d$. We want to show that the point process (or the random field) outside $D$ determines the field inside $D$ with probability 1.

To this end, we consider a bump function $\ph$ that is $\equiv 1$ on $D$. Clearly, the scalings  $\ph_L(z):=\ph(z/L)$ all satisfy these properties. For $[k] \in \Z_+^d$ and $x \in \R^d$, we denote by $x^{[k]}$ the monomial $x_1^{k_1} \cdots x_d^{k_d}$, where $x_i$ and $k_i$ are respectively the co-ordinates of $x$ and $k$.  We also denote by $|k|$ the quantity $\sum_{i=1}^d k_i$. We will apply Proposition \ref{infdec} to the Schwarz functions $\ph^{[k]}(x)=x^{[k]}\ph(x)$.

For large enough $L$, depending only on $\ph,[k]$ and $\Xi$, we have 
\[\var [ \int \ph^{[k]}_L \d [\Xi] ]  \le C(\ph,k) L^{-2k-1}. \] This implies that 
for large enough $L$, depending only on $\ph,[k]$ and $\Xi$, we have 
\[\var [ \int L^{|k|}\ph^{[k]}_L \d [\Xi] ]  \le C(\ph,k) L^{-1}. \] 
Therefore, if $L$ is allowed to vary over the sequence $2^n, n \in \Z$, we can invoke the Borel Cantelli lemma and deduce that $\l(\int L^{|k|} \ph^{[k]}_L \d [\Xi]  - \E[L^{|k|} \ph^{[k]}_L \d [\Xi] ] \r)\to 0$ almost surely. Observe that
\[ \int_{\R^d} L^{|k|} \ph^{[k]}_L \d [\Xi]  =  \int_{D} L^{|k|} \ph^{[k]}_L \d [\Xi] +  \int_{D^\c} L^{|k|} \ph^{[k]}_L \d [\Xi]. \]
  But on the set $D$,  the function $L^{|k|} \ph^{[k]}_L (x)$ is simply $x^{[k]}$. This enables us to deduce that
  \[ \l(\int_D x^{[k]} \d [\Xi] +  \int_{D^\c} L^{|k|} \ph^{[k]}_L \d [\Xi] - \E \l[L^{|k|} \ph^{[k]}_L \d [\Xi] \r] \r) \to 0 \] almost surely.  But both the second and the third terms in the display above can be computed exactly, given the field outside $D$. This completes the proof of maximal rigidity.
\end{proof}

\section{Origins of maximal rigidity : Gaussian processes on $\Z^d$}
\label{sec:Gaussian process}
To explore the origin of the maximal rigidity imposed by the gap, we consider generalized stealthy (GS) Gaussian processes on $\Z^d$. We will consider such a process as a limit of finite ``stealthy'' Gaussian processes. By a finite  Gaussian process, we mean a Gaussian process on $\Z^d_n$; its wave space being identified with the discrete torus $\t^d_n$. In this context, ``GS'' would imply the structure function vanishing on a subset $U_n \subset \t^d_n$. The idea is that, for large $n$, $\t^d_n$ approximates $\t^d$ and $U_n$ approximates a neighbourhood in $\td$. 

Let $\l(\xi_n(x)\r)_{x \in \t^d_n}$ be a Gaussian random variable with covariance
\[ C(x-x')=\sum_{k=0}^{n-1}S(\frac{2 \pi i k}{n})\exp\l(\frac{2 \pi i k}{n}(x-x')\r).  \] It follows that this Gaussian field (that lives on $\Z^d_n$) has the structure function $S(\cdot)$ (that lives on $\t^d_n$).  It also follows that this Gaussian random variable has a density (with respect to the Lebesgue measure on $\R^n$) that is given by
\[  \mu(\xi)= \frac{1}{(2 \pi)^{d/2} |C|^{1/2}} \exp\l( -\frac{1}{2}  \sum_{x,x' \in \t_n^d} \xi(x) C^{-1}(x-x') \xi(x')\r)\]
Let $U$ be a fixed open set in $\t^d$.  We choose $S$ to be a non-negative function on $\t^d$ that vanishes on $U$; note that by the Bochner correspondence, such a function gives rise to a (unique up to translations) Gaussian process on $\Z^d$. To emulate a (generalized) stealthy Gaussian process in the finite setting, we simply consider the restriction of $S$ to $\t_n^d$ (identified canonically as a subset of $\t^d$).

We now define the Gaussian vector $\wt{\xi}$ (on $\t^d_n$) by 
\[ \wt{\xi}(k) = \frac{1}{n^{d/2}} \sum_{x \in \Z^d_n} \xi(x) \exp(i kx), \] where $k=\frac{2\pi j}{n}, j \in \Z_n^d$. Observe that for each $k$, this is in particular, a linear combination of $\xi(x)_{x \in \Z^d_n}$. Using the Fourier transform, a simple computation shows that \[ \E[|\wt{\xi}(k)|^2]=S(k),\] and hence $ \E[|\wt{\xi}(k)|^2]=0$ for $k \in U$, which implies $\wt{\xi}(k)=0$ a.s. for such $k$. Since the number of such indices $k$ is $|U|\cdot n^d$, we conclude that the condition of being GS imposes $|U|\cdot n^d$ linear constraints on the Gaussian process $\xi$ living on the physical space $\Z^d_n$. Because of these linear constraints, the values of the Gaussian process on any domain (in $\Z^d_n$) of size $|U|\cdot n^d$ is determined exactly by the rest of the values. This gives a version of maximal rigidity for finite $n$.

Thus, for finite $n$, a GS Gaussian process exhibits degeneracy (i.e., mutual  algebraic dependence of its values), and the degree of this degeneracy is, heuristically speaking, proportional to the size of the gap in the spectrum. The maximal rigidity can be viewed as a limit of this degeneracy in the finite situation.   

\section{Acknowledgements}
The work of J.L.L. was supported in part by  the AFOSR grant FA9550-16-1-0037. The work of S.G. was supported in part by the ARO grant W911NF-14-1-0094. J.L.L. thanks the Systems Biology group of the Institute for Advanced Studies for their hospitality during this work. The authors thank Salvatore Torquato for valuable discussions.

\end{document}